% ----------------------------------------------------------------
% AMS-LaTeX Paper ************************************************
% **** -----------------------------------------------------------
\documentclass{amsart}
\usepackage{graphicx}
\input xy
\xyoption{all}
% ----------------------------------------------------------------
\vfuzz2pt % Don't report over-full v-boxes if over-edge is small
\hfuzz2pt % Don't report over-full h-boxes if over-edge is small
% THEOREMS -------------------------------------------------------

%Provisional:%
\newenvironment{theorem}[2][Theorem]{\begin{trivlist}
\item[\hskip \labelsep {\bfseries #1}\hskip \labelsep {\bfseries #2}]}{\end{trivlist}}

\newenvironment{question}[2][Question]{\begin{trivlist}
\item[\hskip \labelsep {\bfseries #1}\hskip \labelsep {\bfseries #2}]}{\end{trivlist}}
%Fin de provisional%

\newtheorem{thm}{Theorem}

\newtheorem{lem}[thm]{Lemma}
\newtheorem{prop}[thm]{Proposition}
\theoremstyle{definition}
\newtheorem{defn}[thm]{Definition}
\theoremstyle{definition}

\theoremstyle{remark}

\theoremstyle{remark}

\theoremstyle{definition}
\newtheorem{ex}[thm]{Examples}
\numberwithin{equation}{section}
% MATH -----------------------------------------------------------

% ----------------------------------------------------------------
\begin{document}

\title[Test modules for flat dimension]{On test modules for flat dimension}%
\author{Javier Majadas}%
\address{Departamento de \'Algebra, Facultad de Matem\'aticas, Universidad de Santiago de Compostela, E15782 Santiago de Compostela, Spain}%
\email{j.majadas@usc.es}%

%\subjclass{13H05, 13H10, 13D03}%
\keywords{Test modules, complete intersection, regular local ring, Gorenstein ring, flat dimension}%
\thanks{2010 {\em Mathematics Subject Classification.} 13D05, 13D07, 13H05, 13H10}

%\date{}%
%\dedicatory{}%
%\commby{}%
% ----------------------------------------------------------------
\begin{abstract}
  A ring with a test module of finite upper complete intersection dimension is complete intersection.
\end{abstract}
\maketitle
% ----------------------------------------------------------------

\section{Introduction}

Let  $(A,\mathfrak{m},k)$ be a noetherian local ring. We say that an $A$-module of finite type $M$ is a \textit{test module} if for any local homomorphism of noetherian local rings $A \rightarrow B$, any $B$-module of finite type $N$ such that $Tor_i^A(M,N) = 0$ for all $i \gg 0$ has finite flat dimension over $A$. A well known example of a test module is the residue field $k$.

When $A$ is regular, then any $A$-module of finite type is a test module. Over a hypersurface, Huneke and Wiegand \cite [Theorem 1.9]{HW} prove that test modules are precisely the ones of infinite projective dimension (alternative proofs of this fact were obtained also by Miller \cite [Theorem 1.1]{Mi} and Takahashi \cite [Corollary 7.2]{Ta2}). More generally, if $A$ is a complete intersection, Celikbas, Dao and Takahashi \cite [Proposition 2.7]{CDT} prove that test modules are those of maximal complexity. To be precise, all these characterizations were obtained with a slightly different definition of test modules, but we will show in Proposition \ref{Tt} that they also hold in our case.

Some characterizations of these properties of rings in terms of test modules also exist. First, the Auslander-Buchsbaum-Serre theorem can be stated in terms of test modules as follows:

\begin{theorem} {R.}
If (and only if) there exists a test module of finite projective dimension then $A$ is regular.
\end{theorem}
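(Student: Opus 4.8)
The plan is to prove both implications, with essentially all of the content living in the ``if'' direction. For the (easy) converse, suppose $A$ is regular. As recalled in the introduction, every $A$-module of finite type is then a test module, and the residue field $k$ has finite projective dimension by Auslander--Buchsbaum--Serre; thus $k$ itself is a test module of finite projective dimension, which yields the ``only if'' part.

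For the main ``if'' direction, let $M$ be a test module with $\mathrm{pd}_A M < \infty$. The idea is to exploit the defining property of a test module in its simplest possible instance: I apply the definition to the identity homomorphism $A \to A$ together with the $A$-module of finite type $N = k$. Since $M$ has finite projective dimension, $Tor_i^A(M,k) = 0$ for all $i > \mathrm{pd}_A M$, so in particular $Tor_i^A(M,k) = 0$ for all $i \gg 0$. The test module hypothesis then tells me directly that $k$ has finite flat dimension over $A$.

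The remaining step is to upgrade ``finite flat dimension of $k$'' to regularity of $A$. Here I use that, for a finitely generated module over a noetherian local ring, finite flat dimension coincides with finite projective dimension; concretely, $\mathrm{fd}_A k = \sup\{\, i : Tor_i^A(k,k) \neq 0 \,\} = \mathrm{pd}_A k$. Hence $k$ has finite projective dimension, and Auslander--Buchsbaum--Serre forces $A$ to be regular, completing the argument.

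I do not expect a serious obstacle: once the definition of a test module is unwound, the proof is essentially forced. The only points demanding a little care are (i) recognizing that it suffices to test against the identity homomorphism rather than an arbitrary local homomorphism $A \to B$, so that the full generality of the definition is not needed here, and (ii) the passage from finite flat dimension of $k$ to finite projective dimension, which rests on the standard coincidence of these two invariants for finitely generated modules over a noetherian local ring.
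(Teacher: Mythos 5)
Your proof is correct and matches the paper's (implicit) argument: the paper states Theorem R without proof, as an immediate reformulation of Auslander--Buchsbaum--Serre, and your unwinding --- test against the identity $A \to A$ with $N = k$, use $\mathrm{fd}_A(k) = \mathrm{pd}_A(k)$ for the finitely generated module $k$, then apply Auslander--Buchsbaum--Serre --- is exactly the intended reasoning. For the converse you could even skip invoking that all finite modules are test modules over a regular ring: $k$ is always a test module (Example (i) of the paper), and it has finite projective dimension precisely when $A$ is regular.
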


A similar criterion for Gorensteinness was obtained in \cite [Corollary 3.4]{CDT} in terms of the Gorenstein dimension of Auslander and Bridger \cite {AB} (related criteria also appear in \cite {TTY}):

\begin{theorem} {G.}
Assume that $A$ has a dualizing complex. If (and only if) there exists a test module of finite Gorenstein dimension, then $A$ is Gorenstein.
\end{theorem}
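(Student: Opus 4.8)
The plan is to treat the two implications separately. The converse (``only if'') is immediate: the residue field $k$ is a test module, and over a Gorenstein local ring every finitely generated module---in particular $k$---has finite Gorenstein dimension. Hence $k$ itself is a test module of finite Gorenstein dimension, and only the forward implication requires work.

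For the forward implication, let $M$ be a test module with finite Gorenstein dimension. First I would reduce to the complete case. Completion $A\to\widehat A$ is faithfully flat, so $\widehat M=\widehat A\otimes_A M$ has the same Gorenstein dimension, and $A$ is Gorenstein iff $\widehat A$ is. Moreover $\widehat M$ stays a test module over $\widehat A$: for any local $\widehat A\to B$ and finitely generated $B$-module $N$ one has $\mathrm{Tor}^{\widehat A}_i(\widehat M,N)\cong\mathrm{Tor}^A_i(M,N)$ by flat base change, and finite flat dimension over $A$ is equivalent to finite flat dimension over $\widehat A$ because both are detected by the common residue field $k$. So I may assume $A$ is complete with dualizing complex $D$. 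The homological input I would use is that finiteness of the Gorenstein dimension of $M$ is equivalent to $M$ lying in the Auslander class $\A(A)$. Granting for the moment that $A$ is Cohen--Macaulay, $D$ is, up to shift, the canonical module $\omega$, the class $\A(A)$ is taken with respect to $\omega$, and its defining Tor-vanishing yields $\mathrm{Tor}^A_i(M,\omega)=0$ for all $i>0$, in particular for $i\gg0$. Now I apply the test property with $B=A$ and $N=\omega$: the vanishing forces $\mathrm{fd}_A\omega<\infty$, hence $\mathrm{pd}_A\omega<\infty$ since $\omega$ is finitely generated. As $\omega$ is maximal Cohen--Macaulay, the Auslander--Buchsbaum formula gives $\mathrm{pd}_A\omega=\mathrm{depth}\,A-\mathrm{depth}\,\omega=0$, so $\omega$ is free of rank one and $A$ is Gorenstein.

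The step I expect to be the main obstacle is guaranteeing that $A$ is Cohen--Macaulay, i.e.\ passing correctly between the dualizing complex $D$ and the single module $\omega$. Once $A$ is Cohen--Macaulay the two clean facts used above---that $M\in\A(A)$ gives $\mathrm{Tor}^A_{>0}(M,\omega)=0$ and that $\omega$ has finite injective dimension---are transparent, but in general a spectral-sequence comparison of $\mathrm{Tor}^A_\bullet(M,\omega)$ with the hyperhomology of $D\otimes^{\mathbf L}_A M$ does not by itself isolate the module-level vanishing that the test property needs. I would therefore establish separately that a local ring carrying a test module of finite Gorenstein dimension is Cohen--Macaulay. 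The natural route is to exploit the Auslander--Bridger equality $\mathrm{depth}\,A=\mathrm{G\text{-}dim}_A M+\mathrm{depth}\,M$ together with a descent along an element regular on both $A$ and $M$, checking that such a quotient preserves the test-module property and keeps the Gorenstein dimension finite, and to analyse the resulting low-depth base case. Verifying that the test-module property descends along these quotients, and controlling that base case, is the delicate point on which the whole argument turns.
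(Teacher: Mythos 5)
Your outer reduction and your Cohen--Macaulay case are both fine, and they track the paper closely: the ascent of the test property along $A\rightarrow\hat{A}$ is the special case of the paper's Lemma \ref{ascent} in which the closed fiber is $k$, and the paper's own treatment (the proof of Theorem \ref{Gorenstein}) consists of exactly this completion step followed by a citation of \cite[Corollary 3.4]{CDT}; for the statement as given, with a dualizing complex assumed, the paper relies entirely on that citation. The argument you give after ``granting for the moment that $A$ is Cohen--Macaulay'' --- $M\in\mathcal{A}(A)$ gives $Tor_i^A(M,\omega)=0$ for $i>0$, the test property applied with $B=A$ and $N=\omega$ gives $\mathrm{pd}_A(\omega)<\infty$, and Auslander--Buchsbaum finishes --- is correct. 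But that is the easy case: the Cohen--Macaulay hypothesis is never established, everything after that point is conditional on it, and the non-Cohen--Macaulay case is precisely the substance of \cite[Corollary 3.4]{CDT}. As it stands, your proposal proves the theorem only for Cohen--Macaulay rings.

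The repair you sketch would not close the gap. First, the test-module property does not descend along $A\rightarrow A/xA$ for a general regular element $x$: the change-of-rings isomorphism does give $Tor_i^A(M,N)=0$ and hence $\mathrm{fd}_A(N)<\infty$, but finite flat dimension over $A$ does not imply finite flat dimension over $A/xA$. Concretely, over $A=k[[t]]$ every nonzero finite module is vacuously a test module (all flat dimensions are at most $1$), yet for $x=t^2$, which is regular on $A$ and on $M=A$, the module $M/xM=A/xA$ is not a test module for $A/xA$: all higher Tor's against it vanish, while $\mathrm{fd}_{A/xA}(k)=\infty$. Second, and more fundamentally, even where descent can be salvaged (e.g.\ for $x\notin\mathfrak{m}^2$), cutting by an $A$-regular element lowers $\dim A$ and $\mathrm{depth}\,A$ by one each, so the defect $\dim A-\mathrm{depth}\,A$ is invariant under your proposed induction, and the Auslander--Bridger equality constrains only depths, saying nothing about $\dim A$; no base-case analysis along these lines can produce Cohen--Macaulayness. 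The missing idea is exactly the one you flag yourself: how to play the full dualizing complex $D$, rather than a single module $\omega$, against the test property --- and that step, the heart of \cite[Corollary 3.4]{CDT}, is absent from the proposal.
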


The same authors ask for a similar criterion for complete intersection rings \cite [Question 3.5]{CDT} in terms of the complete intersection dimension of \cite {AGP}:

\begin{question} {CI.}
If (and only if) there exists a test module of finite complete intersection dimension, must $A$ be a complete intersection?
\end{question}

Avramov \cite {Av} introduces the virtual projective dimension as follows (with a slight modification when the residue field of $A$ is finite). Let $A \rightarrow \hat{A}$ be the completion homomorphism (for the topology of the maximal ideal). An $A$-module of finite type $M$ is said of \textit{finite virtual projective dimension} if there exists  a deformation $Q \rightarrow \hat{A}$ (that is, a surjective homomorphism of noetherian local rings with kernel generated by a regular sequence) such that the projective dimension pd$_Q(\hat{A} \otimes_A M)$ is finite. A noetherian local ring $A$ is complete intersection if and only if its residue field is of finite virtual projective dimension if and only if any module of finite type is of finite virtual projective dimension.

Subsequently a modification of this concept, the \textit{complete intersection dimension}, was introduced in \cite {AGP}. The definition is similar, but instead of the completion homomorphism $A \rightarrow \hat{A}$, any flat local homomorphism of noetherian local rings $A \rightarrow A'$ is allowed. Complete intersection dimension shares many of the good properties with virtual projective dimension. In particular, a noetherian local ring $A$ is complete intersection if and only if its residue field has finite complete intersection dimension if and only if any module of finite type has finite complete intersection dimension. Moreover it behaves well with respect to localization. An intermediate definition, the \textit{upper complete intersection dimension}, was given in \cite {Ta1}. It allows only flat local homomorphism of noetherian local rings $(A,\mathfrak{m},k) \rightarrow (A',\mathfrak{m}',k')$ with regular closed fiber $A' \otimes_A k$.

The difficulty of the above Question CI, is that the property of being a test module does not ascend by local flat extensions. For instance, if $(A,\mathfrak{m},k) \rightarrow (A',\mathfrak{m}',k')$ is a flat local homomorphism of noetherian local rings with $A$ regular and $A'$ singular, and $M$ is a test module for $A$, then $A' \otimes_A M$ is never a test module for $A'$, since $Tor_i^{A'}(A' \otimes_A M,k') = Tor_i^A(M,k') = 0$ for all $i \gg 0$. However, we circumvent this difficulty by switching to upper complete intersection dimension, giving in this context an affirmative answer to Question CI (Theorem \ref{ci}). There are no surprises in the proof. On the contrary, the crucial step is in the definition of test module. Since the involved functor is $Tor$, we modify a little the definition of test module used in \cite {CDT} allowing modules to test not only for projectivity (of finite type modules) but also for flatness (of modules which are finite over a local homomorphism). Nevertheless, in Proposition \ref{Tt} we prove that our definition is equivalent to the one used in \cite {CDT} at least over complete intersection rings.

In the last section, we see how this modification in the definition of test module, allows us also to remove easily the supplementary hypothesis that the ring has a dualizing complex in Theorem G.

\section{Complete intersection}

\begin{defn}
A local homomorphism of noetherian local rings $(A,\mathfrak{m},k) \rightarrow (A',\mathfrak{m}',k')$ is \textit{weakly regular} if it is flat and the closed fiber $A' \otimes_A k$ is a regular local ring.  Let $f:A \rightarrow B$ be a local homomorphism of noetherian local rings. A \textit{Cohen factorization} of $f$ is a factorization $A\rightarrow A' \rightarrow B$ such that $A\rightarrow A'$ is weakly regular and $A' \rightarrow B$ is surjective. If $B$ is complete, a Cohen factorization of $f$ always exists \cite{AFH}.

A surjective homomorphism of noetherian local rings with kernel generated by a regular sequence is called a \textit{deformation}.
\end{defn}

\begin{defn}
Let $(A,\mathfrak{m},k)$ be a noetherian local ring. We say that an $A$-module of finite type $M$ is a \textit{test module} (for $A$) if for any local homomorphism of noetherian local rings $A \rightarrow B$ and any $B$-module of finite type $N$, $Tor_i^A(M,N) = 0$ for all $i \gg 0$ implies that the flat dimension of $N$ over $A$, fd$_A(N)$, is finite.
\end{defn}

\begin{ex}
(i) For any noetherian local ring $(A,\mathfrak{m},k)$, the residue field $k$ is a test module.\\*
(ii) If $A \rightarrow B$ is a finite local homomorphism of local rings and $B$ is regular, then the change of rings spectral sequence
$$E_{pq}^2=Tor_p^{B}(Tor_q^{A}(B,N),l) \Rightarrow Tor_{p+q}^A(N,l)$$
(where $l$ is the residue field of $B$) shows that $B$ is a test module for $A$.

\end{ex}

Note that our definition of test module is, a priori, more restrictive than the one used in \cite{CDT}: in that paper, an $A$-module of finite type $M$ is called a \textit{Test module} if for any $A$-module of finite type $N$, $Tor_i^A(M,N) = 0$ for all $i \gg 0$ implies that fd$_A(N)<\infty$. Over a complete intersection ring, we will see that Test modules and test modules are the same thing.

The complexity of an $A$-module of finite type $M$, cx$_A(M)$, is defined as the least integer $c \geq 0$ such that there exists some real number $\alpha$ verifying dim$_k(Tor^A_n(M,k)) \leq \alpha n^{c-1}$ for all $n \gg 0$ \cite [5.1] {AGP}. Over a complete intersection ring $A$, the possible complexities of $A$-modules of finite type are those values between 0 and codim$(A) :=$ edim$(A)$ - dim$(A)$ \cite [5.6, 5.7] {AGP}.

If $A$ is a complete intersection ring, a finite type $A$-module $M$ is a Test module if and only if cx$_A(M)$ = codim$(A)$. The ``if'' part is a direct consequence of a result of C. Miller \cite [Proposition 2.2]{Mi} and was also obtained in \cite [Corollary 1.2]{Jo}: if cx$_A(M)$ = codim$(A)$, and $N$ is a finite type $A$-module such that $Tor_i^A(M,N) = 0$ for all $i \gg 0$, then $Tor_i^A(M,\Omega^tN) = 0$ for all $i > 0$ for some syzygy $\Omega^tN$ of $N$. By \cite [loc.cit.]{Mi}, cx$_A(M \otimes_A \Omega^tN)$ = cx$_A(M)$ + cx$_A(\Omega^tN)$ which means, since the complexity of $M$ is maximal, that cx$_A(\Omega^tN)= 0$, that is, pd$_A(\Omega^tN)<\infty$ where pd denotes projective dimension. Thus pd$_A(N)<\infty$.

The ``only if'' part was proved in \cite [Proposition 2.7]{CDT}.

\begin{prop}\label{Tt}
Let $A$ be a complete intersection ring, and $M$ an $A$-module of finite type. Then $M$ is a test module if and only if it is a Test module.
\end{prop}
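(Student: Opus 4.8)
One implication holds for every noetherian local ring and is immediate from the definitions: taking $B = A$ with the identity homomorphism, the condition defining a test module specializes to exactly the condition defining a Test module, so every test module is a Test module. The content of the proposition is the converse, and here the plan is to exploit the characterization recalled before the statement, namely that over a complete intersection $A$ a finite module is a Test module precisely when its complexity is maximal, cx$_A(M) = $ codim$(A)$. So I would assume $M$ is a Test module, fix a local homomorphism $A \rightarrow B$ and a finite $B$-module $N$ with $Tor_i^A(M,N) = 0$ for all $i \gg 0$, and aim to prove fd$_A(N) < \infty$.

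First I would reduce to the case in which $B$ is complete, so that a Cohen factorization is available. Replacing $B$ by $\hat{B}$ and $N$ by $\hat{N} = \hat{B} \otimes_B N$, the composite $A \rightarrow B \rightarrow \hat{B}$ is local and $\hat{N}$ is finite over $\hat{B}$. Since the $A$-module structure on each $Tor^A$ below factors through $B$ and $B \rightarrow \hat{B}$ is flat, one has $Tor_i^A(N,T) \otimes_B \hat{B} = Tor_i^A(\hat{N},T)$ for every $A$-module $T$; faithful flatness of $B \rightarrow \hat{B}$ then shows, on the one hand, that the hypothesis $Tor_i^A(M,N) = 0$ ($i \gg 0$) passes to $\hat{N}$, and on the other hand (testing against all finite $A$-modules $T$, which suffice to compute flat dimension over a noetherian ring) that fd$_A(N) = $ fd$_A(\hat{N})$. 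Hence it is enough to prove finiteness of fd$_A(\hat{N})$, and I may assume $B$ complete from the outset.

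With $B$ complete I would fix a Cohen factorization $A \rightarrow A' \rightarrow B$, so that $A \rightarrow A'$ is weakly regular and $A' \rightarrow B$ is surjective. Three observations finish the argument. Because $A' \rightarrow B$ is surjective, $N$ is a finite $A'$-module. Because $A$ is complete intersection and $A \rightarrow A'$ is weakly regular, $A'$ is complete intersection. Finally, writing $M' = A' \otimes_A M$, flat base change gives $Tor_i^{A'}(M',N) = Tor_i^A(M,N)$, which vanishes for $i \gg 0$. The key point is that $M'$ is a Test module over $A'$: a minimal $A$-free resolution of $M$ remains minimal after the flat local base change $A \rightarrow A'$, so cx$_{A'}(M') = $ cx$_A(M)$, while the regularity of the closed fiber gives codim$(A') = $ codim$(A)$; thus cx$_{A'}(M') = $ codim$(A')$ is maximal. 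Applying the Test property of $M'$ to the finite $A'$-module $N$ yields fd$_{A'}(N) < \infty$, and since $A \rightarrow A'$ is flat every flat $A'$-resolution is a flat $A$-resolution, so fd$_A(N) \leq $ fd$_{A'}(N) < \infty$.

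I expect the main obstacle to lie in transporting the Test property from $A$ to $A'$: this requires the invariance of complexity under a flat local base change (which I would deduce from minimality of the base-changed resolution) and the invariance of codimension under weakly regular maps, feeding both into the complexity criterion for Test modules over complete intersections. Each ingredient is standard, but all must be assembled correctly for the equality cx $=$ codim to survive the passage to $A'$.
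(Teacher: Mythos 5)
Your proposal is correct and follows essentially the same route as the paper: both directions reduce to the complexity characterization of Test modules, and the converse is handled by passing to a Cohen factorization $A \rightarrow A' \rightarrow \hat{B}$, transporting maximal complexity along the weakly regular map (your minimal-resolution argument is exactly the content of the cited \cite[Proposition 5.2]{AGP}), and descending finiteness of flat dimension via flatness of $A \rightarrow A'$ and faithful flatness of $B \rightarrow \hat{B}$. The only cosmetic difference is that you complete $B$ as a preliminary reduction before factorizing, whereas the paper factorizes $A \rightarrow \hat{B}$ directly.
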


\begin{proof}
Since test $\Rightarrow$ Test $\Rightarrow$ maximal complexity, we only have to prove that if cx$_A(M)$ = codim$(A)$ then $M$ is a test module. So let $A \rightarrow B$ be a local homomorphism of noetherian local rings and $N$ a $B$-module of finite type such that $Tor_i^A(M,N) = 0$ for all $i \gg 0$. Let $A \rightarrow R \rightarrow \hat{B}$ be a Cohen factorization where $\hat{B}$ is the completion of $B$. By flat base change, $Tor_i^R(R\otimes_AM,\hat{N}) = Tor_i^A(M,\hat{N})=Tor_i^A(M,N)\otimes_B\hat{B}=0$ for all $i \gg 0$. Since $A \rightarrow R$ is weakly regular, codim$(A)$ = codim$(R)$. Also, cx$_A(M)$ = cx$_R(R\otimes_AM)$ by \cite [Proposition 5.2]{AGP}. That is, cx$_R(R\otimes_AM)$ = codim$(R)$, so as we have seen above, $R\otimes_AM$ is a Test module for $R$, and then $Tor_i^R(R\otimes_AM,\hat{N})=0$ for all $i \gg 0$ implies pd$_R(\hat{N})<\infty$. Since $A \rightarrow R$ is flat, we have fd$_A(\hat{N})<\infty$ and then fd$_A(N)<\infty$.
\end{proof}

\begin{lem}\label{ascent}
Let $(A,\mathfrak{m},k) \rightarrow (A',\mathfrak{m}',k')$ be a weakly regular homomorphism. If $M$ is a test module for $A$, then $A' \otimes_A M$ is a test module for $A'$.

\end{lem}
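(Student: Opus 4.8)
The plan is to verify the definition of test module for $A'\otimes_A M$ directly: transport the Tor-hypothesis over $A'$ back to a Tor-hypothesis over $A$ by flat base change, invoke the test property of $M$ to get finite flat dimension over $A$, and finally ascend finite flat dimension across the weakly regular map $A\to A'$. So I would start with an arbitrary local homomorphism $A'\rightarrow B$ and a finite $B$-module $N$ with $Tor_i^{A'}(A'\otimes_A M,N)=0$ for $i\gg 0$, and must produce fd$_{A'}(N)<\infty$. Since $A\rightarrow A'$ is flat, a free $A$-resolution of $M$ becomes, after applying $A'\otimes_A -$, a free $A'$-resolution of $A'\otimes_A M$, whence $Tor_i^{A'}(A'\otimes_A M,N)\cong Tor_i^A(M,N)$ for all $i$ (exactly the identification used in the proof of Proposition \ref{Tt}). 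Thus $Tor_i^A(M,N)=0$ for $i\gg 0$. Regarding $N$ as a finite $B$-module via the composite local homomorphism $A\rightarrow A'\rightarrow B$ and using that $M$ is a test module for $A$, I conclude fd$_A(N)<\infty$, and in particular $Tor_q^A(N,k)=0$ for $q\gg 0$.

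The heart of the matter is the passage from fd$_A(N)<\infty$ to fd$_{A'}(N)<\infty$, which is where the weak regularity of $A\to A'$ enters. Write $F=A'\otimes_A k$ for the closed fiber, regular by hypothesis, and $k'$ for the residue field of $A'$. Flatness of $A\rightarrow A'$ makes each $Tor_q^A(N,k)$ an $F$-module and yields a change-of-rings spectral sequence
$$E^2_{pq}=Tor_p^F\!\left(Tor_q^A(N,k),k'\right)\Rightarrow Tor_{p+q}^{A'}(N,k'),$$
coming from the isomorphism $N\otimes^{\mathbf L}_{A'}F\simeq N\otimes^{\mathbf L}_A k$, valid since $A\rightarrow A'$ is flat. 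Because $F$ is regular local, $Tor_p^F(-,k')=0$ for $p>\dim F$, and by the first paragraph $Tor_q^A(N,k)=0$ for $q\gg 0$; hence only finitely many rows and columns of the $E^2$-page are nonzero, and therefore $Tor_i^{A'}(N,k')=0$ for $i\gg 0$.

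It remains to deduce finiteness of fd$_{A'}(N)$ from the eventual vanishing of $Tor_i^{A'}(N,k')$, and this is the step I expect to be the main obstacle: for a general $A'$-module such a deduction is false, and it is precisely the finiteness of $N$ over the local $A'$-algebra $B$ that saves the day. Setting $s=\sup\{i:Tor_i^{A'}(N,k')\neq 0\}<\infty$, I would show fd$_{A'}(N)\le s$ by proving $Tor_i^{A'}(N,T)=0$ for all $i>s$ and all \emph{finite} $A'$-modules $T$ (the general case then following because $Tor$ commutes with filtered colimits). By dévissage along a prime filtration this reduces to $T=A'/\mathfrak p$, and one argues by descending induction on $\dim A'/\mathfrak p$: the case $\mathfrak p=\mathfrak m'$ is the definition of $s$, while for $\mathfrak p\neq\mathfrak m'$ one picks $x\in\mathfrak m'\setminus\mathfrak p$ and applies the long exact Tor sequence of
$$0\rightarrow A'/\mathfrak p\xrightarrow{\ x\ }A'/\mathfrak p\rightarrow A'/(\mathfrak p+xA')\rightarrow 0$$
to see that $x$ acts surjectively on the finite $B$-module $Tor_i^{A'}(N,A'/\mathfrak p)$ for $i>s$; since $x\in\mathfrak m'\subseteq\mathfrak m_B$, Nakayama's lemma forces this Tor to vanish. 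This gives fd$_{A'}(N)\le s<\infty$, as required. Alternatively, this final step may simply be quoted as the known fact that a module finite over a local homomorphism has finite flat dimension over the source exactly when its Tor against the residue field of the source eventually vanishes.
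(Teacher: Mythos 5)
Your proposal is correct and takes essentially the same route as the paper: flat base change turns the Tor hypothesis over $A'$ into one over $A$, the test property of $M$ applied to the composite $A \rightarrow A' \rightarrow B$ gives fd$_A(N)<\infty$, and the same change-of-rings spectral sequence over the regular closed fiber $A'\otimes_A k$ (yours is the paper's $E^2_{pq}$ rewritten via the flat identification $Tor_q^{A'}(N,A'\otimes_Ak)\cong Tor_q^A(N,k)$) yields $Tor_i^{A'}(N,k')=0$ for $i\gg 0$. The only difference is that you justify in full the final step the paper invokes silently --- that eventual vanishing of $Tor_i^{A'}(N,k')$ forces fd$_{A'}(N)<\infty$ because $N$ is finite over the local $A'$-algebra $B$ --- and your dévissage-plus-Nakayama argument for it is sound, with the single quibble that the induction you describe (base case $\mathfrak p=\mathfrak m'$, reducing $\mathfrak p$ to primes strictly containing it) is ascending in $\dim A'/\mathfrak p$ rather than descending, though the reduction is well-founded either way.
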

\begin{proof}
Let $A' \rightarrow B$ be a local homomorphism of noetherian local rings and $N$ a finite $B$-module such that $Tor_i^{A'}(A'\otimes_AM,N) = 0$ for all $i \gg 0$. Since $A'$ is $A$-flat, we have $Tor_i^A(M,N) = 0$ for all $i \gg 0$, and then by hypothesis fd$_A(N)<\infty$.

Consider the change of rings spectral sequence

$$E_{pq}^2=Tor_p^{A'\otimes_Ak}(Tor_q^{A'}(A'\otimes_Ak,N),k') \Rightarrow Tor_{p+q}^{A'}(N,k').$$

The ring $A'\otimes_Ak$ is regular and so $Tor_p^{A'\otimes_Ak}(-,-)=0$ for all $p \gg 0$. Also, $Tor_q^{A'}(A'\otimes_Ak,N) = Tor_q^A(k,N) = 0$ for all $q \gg 0$. That is $E_{pq}^2=0$ for all $p+q \gg 0$, and so $Tor_n^{A'}(N,k')=0$ for all $n \gg 0$. Therefore fd$_{A'}(N)<\infty$.

\end{proof}

\begin{defn} \cite {Ta1}
 We say that a finite module $M\neq0$ over a noetherian local ring $A$ has finite \textit{upper complete intersection dimension} and denote it by CI*-dim$_A(M)<\infty$ if there exist a weakly regular homomorphism $A \rightarrow A'$ and a deformation $Q \rightarrow A'$ such that pd$_Q(A'\otimes_AM)<\infty$.

\end{defn}

\begin{thm}\label{ci}
Let $M$ be a test $A$-module. If (and only if) CI*-dim$_A(M)<\infty$ then A is a complete intersection ring.
\end{thm}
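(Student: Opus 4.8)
The ``only if'' part is the standard fact that over a complete intersection every finite module has finite upper complete intersection dimension \cite{Ta1}, so I would concentrate on the converse. Unwinding CI*-dim$_A(M)<\infty$, I fix a weakly regular homomorphism $A\rightarrow A'$ and a deformation $Q\rightarrow A'=Q/(\mathbf{x})$, where $\mathbf{x}=x_1,\dots,x_c$ is a $Q$-regular sequence, such that pd$_Q(M')<\infty$ with $M'=A'\otimes_A M$. The plan begins with two reductions. First, by Lemma \ref{ascent}, $M'$ is a test module for $A'$. Second, since $A\rightarrow A'$ is flat local with regular (hence complete intersection) closed fibre, $A$ is complete intersection if and only if $A'$ is, and since $A'=Q/(\mathbf{x})$ with $\mathbf{x}$ a regular sequence, $A'$ is complete intersection if and only if $Q$ is. Thus it suffices to prove that $A'$ (equivalently $Q$) is a complete intersection.

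The mechanism I would use is complexity. Because pd$_Q(M')<\infty$, the cohomology operators attached to the deformation $Q\rightarrow A'$ make $\bigoplus_i Ext^i_{A'}(M',N')$ a finitely generated graded module over the polynomial ring $\mathcal{S}=\ell[\chi_1,\dots,\chi_c]$ (with $\ell$ the residue field and $\deg\chi_j=2$) for every finite $A'$-module $N'$; in particular cx$_{A'}(M')\le c<\infty$ \cite{AGP}. On the other hand $A'$ is a complete intersection precisely when cx$_{A'}(\ell)<\infty$. Since the residue field has maximal curvature, cx$_{A'}(\ell)\ge$ cx$_{A'}(M')$ holds automatically, so the whole problem collapses to the single inequality cx$_{A'}(\ell)\le$ cx$_{A'}(M')$; that is, one must show that a test module of finite upper complete intersection dimension has \emph{maximal} complexity. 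This is exactly the non-complete-intersection analogue of the characterization recorded in Proposition \ref{Tt}, and it is where the test hypothesis is consumed.

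Establishing cx$_{A'}(\ell)\le$ cx$_{A'}(M')$ is the step I expect to be the main obstacle. The plan is to argue by contradiction: if the complexity of $M'$ were strictly smaller, the support of $Ext^*_{A'}(M',\ell)$ in $\operatorname{Proj}\mathcal{S}$ would be a proper closed subvariety, and using the operators $\chi_j$ — which act as genuine chain endomorphisms of a \emph{finite} $Q$-free resolution of $M'$ — one should be able to manufacture a finite $A'$-module $N'$ of infinite projective dimension with $Tor^{A'}_i(M',N')=0$ for $i\gg 0$, directly contradicting that $M'$ is a test module. Carrying out this realization of supports relative to the deformation $Q$, rather than over an honest complete intersection where it is classical, is the delicate point. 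A more hands-on alternative is induction on $c$: the base case $c=0$ is Theorem R (there $M'$ is a test module for $A'=Q$ of finite projective dimension, forcing $Q$ regular), while the inductive step passes from $Q$ to $Q/(x_1)$; the difficulty there is precisely that finiteness of pd$_Q(M')$ need not descend to $Q/(x_1)$, so one must instead propagate the complexity bound through the change of rings spectral sequence. Either way, once cx$_{A'}(\ell)<\infty$ is secured, $A'$, and therefore $A$, is a complete intersection.
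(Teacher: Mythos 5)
Your reductions at the start (pass to $M'=A'\otimes_AM$ via Lemma \ref{ascent}; note it suffices to show $Q$, equivalently $A'$, is complete intersection) match the paper, and your ``only if'' direction is fine. But the core of your argument --- the inequality cx$_{A'}(\ell)\leq$ cx$_{A'}(M')$ --- is never actually proved, and both routes you sketch for it are blocked, as you partly concede. The support-realization route has a structural flaw beyond being ``delicate'': the machinery of cohomology varieties over $\mathcal{S}=\ell[\chi_1,\dots,\chi_c]$ can only detect and realize supports inside $\mathbb{A}^c$, and it requires finite generation of the relevant graded $Ext$-modules over $\mathcal{S}$, which for arbitrary $A'$-modules (already for $\ell$ itself, by Gulliksen) holds precisely when $Q$ is regular --- i.e., it presupposes the conclusion. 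Worse, in the boundary case cx$_{A'}(M')=c$ the support of $Ext^*_{A'}(M',\ell)$ is all of $\operatorname{Proj}\mathcal{S}$, so no finite module $N'$ with disjoint nonzero support can exist even in principle, yet the theorem must still be proved in that case; your contradiction scheme is vacuous there. The induction on $c$ you offer as an alternative is likewise left with its key step (descent of the hypothesis from $Q$ to $Q/(x_1)$) unresolved. So the proposal is a genuinely incomplete proof, not a complete alternative.

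The idea you are missing is that one should not try to construct a single module contradicting testness; instead, one transfers the test property \emph{along the deformation} and then applies it universally. The paper argues: since $M'$ is a test module for $A'$ with $\mathrm{pd}_Q(M')<\infty$, [CDT, Corollary 2.6] shows that $A'$ itself is a Test module for $Q$. Now $n:=\mathrm{pd}_Q(A')<\infty$ (Koszul resolution of $A'=Q/(\mathbf{x})$), so $Tor_i^Q(A',N)=0$ for all $i>n$ and for \emph{every} finite $Q$-module $N$; the Test property then gives $\mathrm{pd}_Q(N)<\infty$ for all such $N$, i.e., $Q$ is \emph{regular} --- strictly stronger than the complete intersection conclusion your complexity framework aims at, and obtained with no complexity or variety theory at all. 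Then $A'=Q/(\mathbf{x})$ is complete intersection and flat descent along $A\rightarrow A'$ finishes. The decisive point, absent from your proposal, is that finite projective dimension over $Q$ makes the Tor-vanishing hypothesis in the definition of test module hold for all modules simultaneously, so testness is consumed positively rather than via a contradiction requiring a hard realization theorem.
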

\begin{proof}
Consider a weakly regular homomorphism $A \rightarrow A'$ and a deformation $Q \rightarrow A'$ such that pd$_Q(A'\otimes_AM)<\infty$. Since $A'\otimes_AM$ is a test module for $A'$ by Lemma \ref{ascent}, then $A'$ is a Test module for $Q$ \cite [Corollary 2.6]{CDT}. Since $n:=$ pd$_Q(A')<\infty$, $Tor_i^Q(A',N)=0$ for all $i>n$, and then pd$_Q(N) \leq n$ for any $Q$-module $N$ of finite type. Therefore $Q$ is a regular local ring. Then $A'$ is a complete intersection and by flat descent so is $A$.

\end{proof}

\section{Gorenstein}

\begin{defn}\label{CMdim}\cite {AB}
Let $A$ be a noetherian local ring and $M$ an $A$-module of finite type. We say that G-dim$_A(M)=0$ if the following three conditions hold:\\
(i) The canonical homomorphism $M \rightarrow $ Hom$_A$(Hom$_A$($M$,$A$),$A$) is an isomorphism.\\
(ii) $Ext_A^i(M,A)=0$ for all $i >0$.\\
(iii) $Ext_A^i($Hom$_A(M,A),A)=0$ for all $i >0$.

We say that G-dim$_A(M)<\infty$ if there exists an exact sequence
$$0 \rightarrow T_n \rightarrow ... \rightarrow T_0 \rightarrow M \rightarrow 0$$
where G-dim$_A(T_i)=0$ for $i=0,...,n$.

\end{defn}

\begin{thm}\label{Gorenstein}
Let $M$ be a test $A$-module. If (and only if) G-dim$_A(M)<\infty$ then A is a Gorenstein ring.
\end{thm}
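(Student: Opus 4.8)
The plan is to mirror the structure of the proof of Theorem \ref{ci}, replacing the complete intersection machinery with the Gorenstein criterion already available in the literature. Recall that the definition of finite upper complete intersection dimension proceeds by passing to a weakly regular extension $A\to A'$ and then to a deformation $Q\to A'$. For G-dimension the natural analogue is simpler: G-dimension is already well behaved under flat base change and localization, so I would first reduce to the complete case and set up a Cohen factorization $A\to R\to\hat A$, where $A\to R$ is weakly regular and $R\to\hat A$ is surjective. The point of such a factorization is that $R$ is a regular-fiber flat extension, hence by Lemma \ref{ascent} the module $R\otimes_A M$ is again a test module for $R$, and $\hat A$ (viewed as an $R$-module) inherits finite G-dimension from $M$ by standard ascent properties of G-dimension along weakly regular maps and surjections.

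The key step is then to invoke the Gorenstein analogue of the regularity statement used for complete intersection dimension. Just as Theorem \ref{ci} used the fact that a test module of finite projective dimension over $Q$ forces $Q$ to be regular (Theorem R applied through \cite[Corollary 2.6]{CDT}), here I would use the Gorenstein criterion of \cite[Corollary 3.4]{CDT}: over a ring possessing a dualizing complex, a Test module of finite G-dimension forces the ring to be Gorenstein. The intermediate ring $R$ in the Cohen factorization is complete (being a quotient construction over $\hat A$), hence automatically admits a dualizing complex, so this is exactly the setting in which that result applies. Thus if $R\otimes_A M$ is a test (hence Test) module for $R$ of finite G-dimension, then $R$ is Gorenstein.

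Concretely, I would argue: since $A\to R$ is weakly regular, $R\otimes_A M$ is a test module for $R$ by Lemma \ref{ascent}; since G-dimension ascends along flat local homomorphisms with Gorenstein closed fiber (and the regular closed fiber of a weakly regular map is in particular Gorenstein), G-dim$_R(R\otimes_A M)<\infty$. Being a test module it is in particular a Test module, so \cite[Corollary 3.4]{CDT} applies to the complete local ring $R$ and shows $R$ is Gorenstein. Finally, because $A\to R$ is flat and the closed fiber $R\otimes_A k$ is regular (hence Gorenstein), the Gorenstein property descends to $A$: a flat local homomorphism with Gorenstein fiber has Gorenstein source whenever the target is Gorenstein.

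The main obstacle I expect is precisely the hypothesis-removal advertised in the introduction: \cite[Corollary 3.4]{CDT} requires a dualizing complex, and the whole point of this section is to dispense with that assumption on $A$. The resolution is that by first applying Lemma \ref{ascent} to push everything into a \emph{complete} ring $R$ (from the Cohen factorization of the completion map), one lands in a ring that always has a dualizing complex, so the hypothesis becomes free. Getting this transfer clean requires care on two fronts: first, checking that finite G-dimension genuinely ascends from $M$ over $A$ to $R\otimes_A M$ over $R$ along the weakly regular map (which follows from the behaviour of G-dimension under flat base change, e.g. \cite{AB}); and second, that Gorensteinness faithfully descends along the flat map $A\to R$ with regular fiber. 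Both are standard, so once the Cohen factorization is in place the argument should run parallel to Theorem \ref{ci} with no genuine surprises.
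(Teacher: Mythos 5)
Your proposal is correct and is essentially the paper's own argument, just with one redundant layer: since the completion map $A \rightarrow \hat{A}$ is itself weakly regular (flat with closed fiber $\hat{A}\otimes_A k = k$, a field), the paper dispenses with the Cohen factorization entirely and takes your $R$ to be $\hat{A}$ directly, which also makes the completeness of $R$ (needed for the dualizing complex in \cite[Corollary 3.4]{CDT}) immediate rather than an appeal to the construction in \cite{AFH}. All the remaining steps — flat ascent of finite G-dimension (G-dim$_{\hat{A}}(\hat{M}) \leq$ G-dim$_A(M)$, via the Hom base-change identities, cf. \cite[Corollary 5.11]{Ch}), Lemma \ref{ascent} to make $\hat{M}$ a test module, \cite[Corollary 3.4]{CDT} over the complete ring, and faithfully flat descent of Gorensteinness — coincide with the paper's proof.
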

\begin{proof}
Using that for a homomorphism $A \rightarrow A'$ and an $A$-module $T$, Hom$_{A'}(A'\otimes_AT,A') = $ Hom$_A(T,A')$, and that if $T$ is of finite type and $A'$ is $A$-flat we have also Hom$_A(T,A') = A'\otimes_A$Hom$_A(T,A)$, it is easy to prove that if $\hat{A}$ is the completion of $A$, then G-dim$_{\hat{A}}(\hat{M}) \leq $ G-dim$_A(M)<\infty$ (a more general result can be seen in \cite [Corollary 5.11]{Ch}).

By Lemma \ref{ascent}, $\hat{M}$ is a test $\hat{A}$-module. So from \cite[Corollary 3.4]{CDT} we deduce that $\hat{A}$ is Gorenstein, and then so is $A$.

\end{proof}

% ----------------------------------------------------------------
%\bibliographystyle{amsplain}%
%\bibliography{}%

\end{document}